\newtheorem{teo}{Theorem}
\newtheorem{lemma}[teo]{Lemma}
\newtheorem{coro}[teo]{Corollary}
\newcommand{\R}{\mathbb{R}}
\newcommand{\ii}{\mathrm{i}}
\newcommand{\eps}{\epsilon}
\newcommand{\N}{\mathbb{N}}
\newcommand{\C}{\mathbb{C}}
\newcommand{\CP}{\mathbb{C}\textrm{P}}
\newcommand{\RP}{\mathbb{R}\mathrm{P}}
\newcommand{\Z}{\mathbb{Z}}
\newcommand{\Q}{\mathcal{Q}}
\theoremstyle{remark} 
\newtheorem{remark}[]{Remark}
\title{The total Betti number of the intersection of three real quadrics}
\author{ A. Lerario}
\thanks{SISSA, Trieste}
\begin{document}

\maketitle

\begin{abstract}We prove the bound $b(X)\leq n(n+1)$ for the total Betti number of the intersection $X$ of three quadrics in $\RP^{n}$. This bound improves the classical Barvinok's one which is at least of order three in $n$.\end{abstract}

\section{Introduction}
In this paper we address the problem of bounding the total Betti number of the intersection $X$ of three real quadrics in $\RP^{n}$. In the case $X$ were  a \emph{smooth, complete intersection}, then its total Betti number can be easily bounded using Smith's theory: its equations can be \emph{real} perturbed as not to change its topology (it is smooth) and to make its complex points also a smooth complete intersection; in the case of a complete intersection $X_{\C}$ of three quadrics in $\CP^{n}$ it is possible to compute its Betti numbers using Hirzebruch's formula and this would give a bound of the type $b(X)\leq b(X_{\C})\leq p(n)$ where $p$ is a polynomial of degree two.\\
In the general case, i.e. when we make no regularity assumption on $X$, the problem turns out to be more complicated. If we simply naively perturb the equations defining $X$ we can of course make its complex point to be smooth, but then the topology of the real part would have change.\\
The very first attempt to bound the topology of the intersection of $k$ quadrics in $\RP^{n}$ is to use the well known
Oleinik-Petrovskii-Thom-Milnor inequality (see \cite{BPR}), which gives the
estimate\footnote{According to \cite{BPR} in this context the notation $f(n)=O(n)$ means that there exists a natural number $b$ such that the inequality $f(n)\leq bn$ hods for every $n\in \N$.}:
$$b(X)\leq O(k)^{n}.$$
Surprisingly enough it turns out that the fact that $X$ is
defined by quadratic equations allows to interchange the role of the two numbers
$n$ and $k$ and to get the classical Barvinok's bound (see \cite{Ba}): 
$$b(X)\leq n^{O(k)}.$$ The hidden constant in the exponent for this estimate is at least two, as noticed also by the authors of \cite{BaKe}, where a more refined estimate is presented (but of the same leading order).\\
In particular in the case $X$ is the intersection of \emph{three} quadrics in $\RP^{n}$ this classical estimates would give 
$$b(X)\leq n^{O(3)}.$$
The passage from Oleinik-Petrovskii-Thom-Milnor bound to Barvinok's one is essentially made using a kind of duality argument, which works for the quadratic case, between the number of variables and the number of equations. This idea appeared for the first time in the paper \cite{Agrachev2} and we explain it now. If we have the quadratic forms $q_{0},\ldots, q_{k}$ on $\R^{n+1},$ then we can consider their linear span $L$ in the space of all quadratic forms. The arrangement of $L$ with respect to the subset $Z$ of degenerate quadratic forms (those with at least one dimensional kernel) determines in a very precise way the topology of the base locus: 
$$X=\bigcap_{q\in L\backslash \{0\}}\{[x]\in \RP^{n}\, |\,q(x)\leq 0\}.$$ The simplest invariant we can associate to a quadratic form $q$ is its positive inertia index $\ii^{+}(q)$, namely the maximal dimension of a subspace $V\subset \R^{n+1}$ such that $q|_{V}$ is positive definite. In a similar fashion we are led to consider for $j\in \N$ the sets:
	$$\Omega^{j}=\{q\in L\backslash \{0\}\,|\, \ii^{+}(q)\geq j\}.$$ The spirit of the mentioned duality is in this procedure of replacing the original framework with the filtration:
	$$\Omega^{n+1}\subseteq\Omega^{n}\subseteq\cdots\subseteq \Omega^{1}\subseteq\Omega^{0}.$$  This duality is widely investigated in the paper \cite{AgLe}, where a spectral sequence converging to the homology of $X$ is studied; this spectral sequence has second term $E_{2}^{i,j}$ isomorphic to $H^{n-i}(L,\Omega^{j+1}).$ Thus, at a first approximation, the previous cohomology groups can be taken as the homology of $X$ and as long as we consider finer properties of the arrangement of $L$, then new information on the topology of its base locus is obtained. It is remarkable that only using this approximation the classical Barvinok's bounds can be recovered; in this setting they can be formulated in the form:
\begin{equation}\label{eq:top}b(X)\leq n+1+\sum_{j\geq 0}b(\Omega^{j+1})\end{equation}
The introduction of the full methods from \cite{AgLe} made also possible in some cases to strongly improve the classical bounds. For example in the paper \cite{Le} the author proves that the total Betti number of the intersection $X$ of two quadrics in $\RP^{n}$ is bounded by $3n+2$; in the same papers are provided also bounds linear in $k$ for each specific Betti number of $X.$\\
The intersection of $L$ with the set of degenerate forms $Z$ is customary called the spectral variety $C$ of the base locus $X$. In this paper we present the idea that in the case $X$ is the intersection of \emph{three} quadrics, then its topological complexity is essentially that of its spectral variety $C;$ this variety is in fact the ``difference'' between the various sets $\Omega^{j}$  and thus the sum in the right hand side of (\ref{eq:top}) in a certain sense is ``bounded'' by $b(C).$ This idea originally appears in \cite{Agrachev1} in the regular case. Here enters the deep connection, generally called Dixon's correspondence, between the intersection of three real quadrics in $\RP^{n}$ and curves of degree $n+1$ on $\RP^{2}$: in the case $X$ is a \emph{smooth, complete intersection} the corresponding curve is the projectivization of the spectral variety (see \cite{Dixon} and \cite{DeItKh}). In our framework $X$ is no longer smooth, nor a complete intersection, and a pertubative approach is introduced to study it; this approach associates to $X$ a \emph{smooth} curve  in $S^{2}$ which replaces the role of the spectral variety (indeed in the regular case this curve is the double cover of the curve given by Dixon's correspondence). We relate the complexity of $X$ to that of this new curve and using a Harnack's type argument on the sphere will give us the mentioned bound $b(X)\leq n(n+1).$
\section*{Acknowledgements}
The author is grateful to his teacher A. A. Agrachev: the main idea of this paper is a natural development of his intuition.

\section{General settings}

We recall in this section a general construction to study the topology of intersection of real quadrics. We set $\Q(n+1)$ for the space of real quadratic forms in $n+1$ variables; if $q_{0},\ldots, q_{k}$ belong to this space, then we can consider their common zero locus $X$ in $\RP^{n}:$
$$X=V_{\RP^{n}}(q_{0},\ldots, q_{k}), \quad q_{0},\ldots, q_{k}\in \Q(n+1)$$
To study the topology of $X$ we introduce the following auxiliary construction. We denote by $q$ the $(k+1)$-ple $(q_{0},\ldots, q_{k})$ and consider the map $\overline q:S^{k}\to \Q(n+1)$, defined by
$$\omega \mapsto \omega q=\omega_{0}q_{0}+\cdots+\omega_{k}q_{k},\quad \omega=(\omega_{0},\ldots, \omega_{k})\in S^{k}.$$
This map places the unit sphere $S^{k}$ linearly into the space of quadratic forms, in the direction of the chosen quadrics. For a given quadratic form $p\in Q(n+1)$ we denote by $\ii^{+}(p)$ its positive inertia index, namely the maximal dimension of a subspace of $\R^{n+1}$ such that the restriction of $p$ to it is positive definite. For a \emph{family} of quadratic forms depending on some parameters, like the map $\overline q$ describes, we consider the geometry of this function on the parameter space. We are thus naturally led to define the sets:
$$\Omega^{j}=\{\omega \in S^{k}\, |\, \ii^{+}(\omega q)\geq j\},\quad j\in \N$$
The following theorem relates the topological complexity of $X$ to that of the sets $\Omega^{j}.$ For a semialgebraic set $S$ we define $b(S)$ to be the sum of its Betti numbers\footnote{From now on every homology group is assumed with $\Z_{2}$ coefficients, unless differently stated; the same remark applies to Betti numbers.}.
 \begin{teo}[Topological complexity formula]
$$b(X)\leq n+1+\sum_{j\geq0}b(\Omega^{j+1})$$
\begin{proof}Consider the topological space $B=\{(\omega,[x])\in S^{k}\times \RP^{n}\,|\,  (\omega q)(x)>0\}$ together with its two projections $p_{1}:B\to S^{k}$ and $p_{2}:B\to \RP^{n}.$ The image of $p_{2}$ is easily seen to be $\RP^{n}\backslash X$ and the fibers of this map are contractible sets, hence $p_{2}$ gives a homotopy equivalence $B\sim\RP^{n}\backslash X.$ Consider now the projection $p_{1};$ for a point $\omega \in S^{2}$ the fiber $p_{1}^{-1}(\omega)$ has the homotopy type of a projective space of dimension $\ii^{+}(\omega q)-1$, thus the Leray spectral sequence for $p_{1}$ converges to $H^{*}(\RP^{n}\backslash X)$ and has the second terms $E_{2}^{i,j}$ isomorphic to $H^{i}(\Omega^{j+1}).$ A detailed proof of the previous statements can be found in \cite{AgLe}. Since $\textrm{rk}(E_{\infty})\leq \textrm{rk}(E_{2})$ then $b(\RP^{n}\backslash X)\leq \sum_{j\geq0} b(\Omega^{j+1})$. Recalling that by Alexander-Pontryagin duality $H_{n-*}(X)\simeq H^{*}(\RP^{n},\RP^{n}\backslash X),$ then the exactness of the long cohomology exact sequence of the pair $(\RP^{n},\RP^{n}\backslash X)$ gives the desired inequality. \end{proof}
\end{teo}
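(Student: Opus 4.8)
The plan is to replace the complement $\RP^{n}\setminus X$ by an auxiliary ``double fibration'' and then combine the Leray spectral sequence of one of its projections with Alexander duality. Concretely, I would introduce the space $B=\{(\omega,[x])\in S^{k}\times\RP^{n}\,|\,(\omega q)(x)>0\}$, which is well defined since the sign of $(\omega q)(x)$ does not depend on the representative of $[x]$, together with its two projections $p_{1}\colon B\to S^{k}$ and $p_{2}\colon B\to\RP^{n}$. The first thing to check is that $p_{2}$ realizes a homotopy equivalence onto $\RP^{n}\setminus X$: a point $[x]$ lies in the image precisely when the vector $(q_{0}(x),\ldots,q_{k}(x))$ is nonzero, i.e. precisely when $[x]\notin X$, and the fiber $p_{2}^{-1}([x])$ is then the open hemisphere $\{\omega\,|\,\langle\omega,(q_{0}(x),\ldots,q_{k}(x))\rangle>0\}$. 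Since $p_{2}$ is a locally trivial bundle with contractible (convex) fibers, a fiberwise straight-line retraction onto the ``centers'' gives $B\simeq\RP^{n}\setminus X$, hence $H^{*}(B)\cong H^{*}(\RP^{n}\setminus X)$.

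Next I would analyze $p_{1}$. For fixed $\omega$ the fiber $p_{1}^{-1}(\omega)=\{[x]\in\RP^{n}\,|\,(\omega q)(x)>0\}$ is the set of positive lines of the quadratic form $\omega q$; after diagonalizing, it deformation retracts onto the projectivization of a maximal positive definite subspace, so it has the homotopy type of $\RP^{\ii^{+}(\omega q)-1}$ (and is empty when $\ii^{+}(\omega q)=0$). Consequently the stalk of $R^{j}(p_{1})_{*}\Z_{2}$ at $\omega$ is $\Z_{2}$ exactly when $\ii^{+}(\omega q)\ge j+1$, that is when $\omega\in\Omega^{j+1}$, and vanishes otherwise. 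Since $\ii^{+}$ is lower semicontinuous the sets $\Omega^{j+1}$ are open, and a local triviality argument for $p_{1}$ over the semialgebraic strata where $\ii^{+}$ is constant identifies $R^{j}(p_{1})_{*}\Z_{2}$ with the constant sheaf $\Z_{2}$ on $\Omega^{j+1}$; this yields that the Leray spectral sequence of $p_{1}$ has $E_{2}^{i,j}\cong H^{i}(\Omega^{j+1})$ and converges to $H^{*}(B)\cong H^{*}(\RP^{n}\setminus X)$. Because passing to subquotients cannot increase ranks, this already gives $b(\RP^{n}\setminus X)=\mathrm{rk}(E_{\infty})\le\mathrm{rk}(E_{2})=\sum_{j\ge0}b(\Omega^{j+1})$.

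Finally I would pass from the complement back to $X$. As $X$ is a closed semialgebraic subset of the compact manifold $\RP^{n}$, Alexander--Pontryagin duality with $\Z_{2}$ coefficients gives $H_{n-\ell}(X)\cong H^{\ell}(\RP^{n},\RP^{n}\setminus X)$; feeding this into the long exact cohomology sequence of the pair $(\RP^{n},\RP^{n}\setminus X)$ and summing dimensions over $\ell$ produces $b(X)\le b(\RP^{n})+b(\RP^{n}\setminus X)$, and since $b(\RP^{n})=n+1$ over $\Z_{2}$ the two estimates combine to the asserted bound. The one genuinely delicate step is the identification $E_{2}^{i,j}\cong H^{i}(\Omega^{j+1})$, which rests on a careful study of the local structure of the family $\overline q$ near the sets on which the inertia index jumps; I would simply invoke \cite{AgLe} for this. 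The remaining ingredients --- contractibility of hemispheres, the spectral-sequence rank inequality, Alexander duality, and the bookkeeping in the long exact sequence --- are all routine.
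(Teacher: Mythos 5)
Your proposal is correct and follows essentially the same route as the paper: the same auxiliary space $B$ with its two projections, the homotopy equivalence via $p_{2}$, the Leray spectral sequence of $p_{1}$ with $E_{2}^{i,j}\cong H^{i}(\Omega^{j+1})$ (delegating the sheaf-theoretic identification to \cite{AgLe}, as the paper does), and finally Alexander--Pontryagin duality with the long exact sequence of the pair. The only difference is that you spell out a few routine verifications (convexity of the hemisphere fibers, lower semicontinuity of $\ii^{+}$) that the paper leaves implicit.
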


\begin{remark}A more refined formula for $b(X)$ follows by considering a different spectral sequence directly converging to $H_{n-*}(X)$. In fact by \cite{AgLe} there exists such a spectral sequence $(E_{r},d_{r})_{r\geq 0}$ with second term $E_{2}^{i,j}\simeq H^{i}(B,\Omega^{j+1})$, where $B$ is the unit ball in $\R^{k+1}$ and $\Omega^{j+1}\subseteq \partial B.$ If we let $\mu$ be the maximum of $\ii^{+}$ on $S^{k}$ and $\nu$ be its minumum, then we get $b(X)\leq \textrm{rk}(E_{2})\leq n+1-2(\mu-\nu)+\sum_{\nu+1\leq j+1\leq \mu}b(\Omega^{j+1})$. The paper \cite{AgLe} contains a description of the second differential of this spectral sequence, which happens to be related with the set of points on $S^{k}$ where $\omega q$ has multiple eigenvalues, together with applications.
\end{remark}

\begin{remark}By universal coefficients theorem, the previous bound is valid also for the total Betti number of $X$ with coefficients in $\Z$ (but on the right hand side $\Z_{2}$ coefficients are still assumed).
\end{remark}

The previous formula, together with some results from \cite{BPR}, can be used to give the classical Barvinok's estimate (see the paper \cite{Ba}).
\begin{coro}[Barvinok's estimate]$$b(X)\leq (n+1)^{O(2k+2)}.$$
\begin{proof}Let us fix a scalar product; then the rule $\langle x,(\omega Q)x\rangle =q(x)$ defines a symmetric matrix $\omega Q$ whose number of positive eigenvalues equals $\ii^{+}(\omega q).$ Consider the polynomial $\det (\omega Q-tI)=a_{0}(\omega)+\cdots+a_{n}(\omega)t^{n}\pm t^{n+1};$ then by Descartes' rule of signs  the positive inertia index of $\omega Q$ is given by the sign variation in the sequence $(a_{0}(\omega), \ldots, a_{n}(\omega)).$ Thus the sets $\Omega^{j+1}$ are defined on the sphere $S^{k}$ by sign conditions (quantifier-free formulas)  whose atoms are polynomials in $k+1$ variables and of degree less than $n+1$. For such sets we have the estimate, proved in \cite{BPR}: $b(\Omega^{j+1})\leq (n+1)^{O(2k+1)}$. Putting all them together we get:
	$$b(X)\leq n+1+\sum_{j=0}^{n}b(\Omega^{j+1})\leq (n+1)^{O(2(k+1))}$$
(notice that $k+1$ is the number of quadrics cutting $X$).
\end{proof}
\end{coro}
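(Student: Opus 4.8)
The plan is to combine the Topological Complexity Formula with a classical semialgebraic Betti number bound from \cite{BPR}, the only issue being to present the sets $\Omega^{j+1}$ in a form to which that bound applies. First I would fix an auxiliary scalar product on $\R^{n+1}$ so that each quadratic form $p$ corresponds to a symmetric matrix $P$ via $p(x)=\langle x, Px\rangle$; under this identification $\ii^{+}(p)$ is exactly the number of positive eigenvalues of $P$. Applying this to $\overline q$, the form $\omega q$ corresponds to the symmetric matrix $\omega Q = \omega_0 Q_0+\cdots+\omega_k Q_k$, whose entries are linear forms in $\omega\in S^k$, hence polynomials of degree one in $k+1$ variables.

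Next I would use the characteristic polynomial to turn the eigenvalue count into a sign-variation count. Write $\det(\omega Q - tI) = a_0(\omega)+a_1(\omega)t+\cdots+a_n(\omega)t^n \pm t^{n+1}$; each coefficient $a_i(\omega)$ is (up to sign) a sum of $i\times i$ principal minors of $\omega Q$, hence a polynomial of degree at most $n+1-i \le n+1$ in the $k+1$ variables $\omega_0,\dots,\omega_k$. Since $\omega Q$ is symmetric all its eigenvalues are real, so Descartes' rule of signs applies without loss: the number of positive eigenvalues equals the number of sign changes in the sequence $(a_0(\omega),\dots,a_n(\omega),\pm 1)$. Consequently the condition $\ii^+(\omega q)\ge j+1$ is equivalent to a quantifier-free Boolean combination of sign conditions on the polynomials $a_0,\dots,a_n$ (one enumerates the sign patterns of the $a_i$ producing at least $j+1$ sign changes), so each $\Omega^{j+1}$ is a semialgebraic subset of $S^k$ described by polynomials of degree $\le n+1$ in $k+1$ variables.

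Then I would invoke the quantitative bound from \cite{BPR} on the total Betti number of a semialgebraic set cut out on a sphere by sign conditions whose atoms are $s$ polynomials of degree $\le d$ in $m$ variables — of the shape $(O(d))^{m}$ once $s$ and the Boolean complexity are absorbed into the $O$ — which here yields $b(\Omega^{j+1}) \le (n+1)^{O(2k+1)}$ uniformly in $j$. Finally, feeding this into the Topological Complexity Formula,
$$b(X)\le n+1+\sum_{j=0}^{n} b(\Omega^{j+1}) \le n+1 + (n+1)\cdot (n+1)^{O(2k+1)} \le (n+1)^{O(2(k+1))},$$
where the sum has at most $n+1$ nonzero terms because $\ii^+$ takes values in $\{0,1,\dots,n+1\}$. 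I expect the only genuinely delicate point to be bookkeeping: checking that the number of atomic polynomials and the degree bound $n+1$ are exactly what the cited estimate in \cite{BPR} requires, and that the resulting exponent is correctly accounted as $O(2(k+1))$ rather than something larger — everything else is a direct assembly of the ingredients already in place.
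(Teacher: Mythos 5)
Your proposal is correct and follows essentially the same route as the paper: identify $\omega q$ with the symmetric matrix $\omega Q$, use Descartes' rule of signs on the characteristic polynomial to present each $\Omega^{j+1}$ as a sign-condition set on $S^k$ with atoms of degree at most $n+1$ in $k+1$ variables, apply the bound from \cite{BPR}, and sum over the at most $n+1$ relevant values of $j$ via the topological complexity formula. The extra care you take (noting the $a_i$ are sums of principal minors of degree $n+1-i$, and that all eigenvalues are real so Descartes' rule is exact) only makes the argument more explicit than the paper's.
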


\begin{remark}The paper \cite{Ba} contains the bound for the set $S$ of solutions of $k$ quadratic inequalities in $\R^{n+1},$ which is $b(S)\leq (n+1)^{O(k)}$. The set $X$ in $\RP^{n}$ can be viewed as double covered by a subset $X'$ in $\R^{n+1}$ defined by $k+2$ quadratic inequalities (those defining $X$ together with the quadratic equation for the unit sphere); by the transfer exact sequence $b(X)\leq \frac{1}{2}b(X')$ and by Barvinok's estimate $b(X')\leq (n+1)^{O(k+2)}$; since the constant hidden in the previous exponent is at least two, then we get the same order as in the previous corollary.\\
A more refined bound in the general case can be found in \cite{BaKe}.
\end{remark}

\section{Preliminaries on perturbations}
Let now $p\in \Q(n+1)$ be a positive definite form (we will use the notation $p>0$ for such forms) and for every $\eps>0$ and $k\in \N$ let us define the sets:
$$\Omega_{n-j}(\eps)=\{\omega \in S^{k}\, |\, \ii^{-}(\omega q-\eps p)\leq n-j\}$$
where $\ii^{-}$ denotes the negative inertia index, i.e. $\ii^{-}(\omega q-\eps p)=\ii^{+}(\eps p-\omega q).$ The following lemma relates the topology of $\Omega^{j+1}$ and of its perturbation $\Omega_{n-j}(\eps).$ 
\begin{lemma}
\label{union}For every positive definite form $p\in \Q(n+1)$  and for every $\eps>0$ sufficiently small
$$b(\Omega^{j+1})=b(\Omega_{n-j}(\eps)).$$
\begin{proof}
Let us first prove that $\Omega^{j+1}=\bigcup_{\eps>0}\Omega_{n-j}(\eps).$\\
Let $\omega \in \bigcup_{\eps>0}\Omega_{n-j}(\eps);$ then there exists $\overline{\eps}$ such that $\omega\in \Omega_{n-j}(\eps)$ for every $\eps<\overline{\eps}.$ Since for $\eps$ small enough $$\ii^{-}(\omega q-\eps p)=\ii^{-}(\omega q)+\dim (\ker (\omega q))$$ then it follows that $$\ii^{+}(\omega q)=n+1-\ii^{-}(\omega q)-\dim (\ker \omega q)\geq j+1.$$ Viceversa if $\omega \in \Omega^{j+1}$ the previous inequality proves $\omega\in \Omega_{n-j}(\eps)$ for $\eps$ small enough, i.e. $\omega \in \bigcup_{\eps>0}\Omega_{n-j}(\eps).$\\
Notice now that if $\omega\in \Omega_{n-j}(\eps)$ then, eventually choosing a smaller $\eps$, we may assume $\eps$ properly separates the spectrum of $\omega$ and thus, by continuity of the map $\overline q$, there exists $U$ open neighborhood of $\omega$ such that $\eps$ properly separates also the spectrum of $\eta q$ for every $\eta \in U$ (see \cite{Kato} for a detailed discussion of the regularity of the eigenvalues of a family of symmetric matrices). Hence every $\eta \in U$ also belongs to $\Omega_{n-j}(\eps)$. From this consideration it easily follows that each compact set in $\Omega^{j+1}$ is contained in some $\Omega_{n-j}(\eps)$ and thus $$\varinjlim_{\eps}\{H_{*}(\Omega_{n-j}(\eps))\}=H_{*}(\Omega^{j+1}).$$ It remains to prove that the topology of $\Omega_{n-j}(\eps)$ is definitely stable in $\eps$ going to zero. Consider the semialgebraic compact set $S_{n-j}=\{(\omega, \eps)\in S^{k}\times [0, \infty)\, |\, \ii^{-}(\omega q-\eps p)\leq n-j\}$. By Hardt's triviality theorem (see \cite{BCR}) we have that the projection $(\omega, \eps)\mapsto \omega$ is a locally trivial fibration over $(0,\eps)$ for $\eps$ small enough; from this the conclusion follows.
\end{proof}
\end{lemma}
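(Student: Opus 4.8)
The plan is to prove the equality in two movements: first identify $\Omega^{j+1}$ with the increasing union $\bigcup_{\eps>0}\Omega_{n-j}(\eps)$, and then show that this union is captured, homologically, by a single member $\Omega_{n-j}(\eps)$ with $\eps$ small. For the set identification, fix $\omega\in S^{k}$ and observe that for $\eps$ smaller than the least nonzero eigenvalue of the pencil $(\omega q,p)$ the perturbation $\omega q-\eps p$ keeps the negative eigenvalues of $\omega q$ negative, turns the zero ones negative (since $p>0$), and keeps the positive ones positive, so $\ii^{-}(\omega q-\eps p)=\ii^{-}(\omega q)+\dim\ker(\omega q)$; by Sylvester's law this equals $n+1-\ii^{+}(\omega q)$, whence $\omega\in\Omega_{n-j}(\eps)$ precisely when $\ii^{+}(\omega q)\geq j+1$. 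Together with the monotonicity of $\eps\mapsto\ii^{-}(\omega q-\eps p)$, which yields $\Omega_{n-j}(\eps_{1})\subseteq\Omega_{n-j}(\eps_{2})$ for $\eps_{2}\leq\eps_{1}$, this shows that $\Omega^{j+1}$ is the nested, increasing union of the $\Omega_{n-j}(\eps)$ as $\eps\downarrow0$.

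Next I would pass to homology. The sets $\Omega_{n-j}(\eps)$ are in general only closed, not open, so some care is needed; the key point is that whenever $\eps$ is not a generalized eigenvalue of the pencil $(\omega q,p)$, then $\ii^{-}(\eta q-\eps p)$ is locally constant in $\eta$, by the continuity of the eigenvalues of a continuous family of symmetric matrices (Kato). Hence, given $\omega\in\Omega^{j+1}$, any sufficiently small $\eps$ both satisfies $\omega\in\Omega_{n-j}(\eps)$ and places a whole neighborhood of $\omega$ inside $\Omega_{n-j}(\eps)$. A compactness argument, combined with the nesting, then shows that every compact subset of $\Omega^{j+1}$ is contained in a single $\Omega_{n-j}(\eps)$; since singular cycles and bounding chains have compact image, this gives $H_{*}(\Omega^{j+1})=\varinjlim_{\eps\to0}H_{*}(\Omega_{n-j}(\eps))$, with transition maps induced by inclusion.

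To conclude I would show the directed system is eventually constant, via Hardt's triviality theorem applied to the semialgebraic set $S_{n-j}=\{(\omega,\eps)\in S^{k}\times[0,\infty):\ii^{-}(\omega q-\eps p)\leq n-j\}$ with its projection to the $\eps$-axis: there is $\bar\eps>0$ such that over $(0,\bar\eps)$ this projection is semialgebraically trivial, $S_{n-j}|_{(0,\bar\eps)}\cong F\times(0,\bar\eps)$ with $F$ homeomorphic to each fiber $\Omega_{n-j}(\eps)$, $0<\eps<\bar\eps$. The extra point needed is that, for $\eps_{2}<\eps_{1}<\bar\eps$, the inclusion $\Omega_{n-j}(\eps_{1})\hookrightarrow\Omega_{n-j}(\eps_{2})$ is itself a homotopy equivalence: the two maps $\omega\mapsto(\omega,\eps_{1})$ and $\omega\mapsto(\omega,\eps_{2})$ from $\Omega_{n-j}(\eps_{1})$ into $S_{n-j}|_{[\eps_{2},\eps_{1}]}$ are homotopic (slide the $\eps$-coordinate, which stays in $S_{n-j}$ by the nesting), both fiber inclusions into the product $S_{n-j}|_{[\eps_{2},\eps_{1}]}\cong F\times[\eps_{2},\eps_{1}]$ are homotopy equivalences, and the second map factors through $\Omega_{n-j}(\eps_{1})\hookrightarrow\Omega_{n-j}(\eps_{2})$. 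Hence the transition maps in the directed system are eventually isomorphisms, and $b(\Omega^{j+1})=b(\Omega_{n-j}(\eps))$ for $\eps$ small.

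The hard part will be the second movement together with this last observation: since the $\Omega_{n-j}(\eps)$ are only closed, getting honest homological control requires the perturbation-theoretic input, and Hardt triviality must be exploited not just to see that nearby fibers are abstractly homeomorphic but to see that the inclusions between them are homotopy equivalences --- otherwise one obtains only the inequality $b(\Omega^{j+1})\leq b(\Omega_{n-j}(\eps))$.
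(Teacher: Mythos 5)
Your proof is correct and follows essentially the same route as the paper: the set-theoretic identification of $\Omega^{j+1}$ with the increasing union of the $\Omega_{n-j}(\eps)$ via simultaneous diagonalization, the compactness/direct-limit argument based on local constancy of the index function, and stabilization via Hardt's triviality theorem. The only difference is that you explicitly verify that the inclusions $\Omega_{n-j}(\eps_{1})\hookrightarrow\Omega_{n-j}(\eps_{2})$ induce isomorphisms in homology (by sliding the $\eps$-coordinate inside the trivialized $S_{n-j}$, using the nesting), a point the paper leaves implicit with ``from this the conclusion follows''.
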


Let us now move to specific properties of pencils of three quadrics.\\
We recall that the space of degenerate forms $Z=\{p\in \Q(n+1)\, |\, \ker(p)\neq0\}$ admits a semialgebraic Nash stratification $Z=\coprod Z_{i}$ such that its singularities belong to strata of codimension at least three in $Q(n+1);$ this is a classical result and the reader can see \cite{Agrachev2} for a direct proof.

\begin{lemma}\label{perturb}There exists a positive definite form  $p\in \Q(n+1)$ such that for every $\eps>0$ small enough the curve 
$$C(\eps)=\{\omega \in S^{2}\, |\, \textrm{ker}(\omega q-\eps p)\neq 0\}$$
is a smooth curve in $S^{2}$ such that the difference of the index function $\omega \mapsto \ii^{-}(\omega q-\eps p)$ on adjacent components of $S^{2}\backslash C(\eps)$ is $\pm 1.$
\end{lemma}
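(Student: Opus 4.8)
The plan is to choose $p$ so that the affine slice $\overline q(S^2)-\eps p = \{\omega q - \eps p : \omega\in S^2\}$ of the space of quadratic forms is, after rescaling, transverse to the stratified variety $Z$ of degenerate forms. First I would recall that $Z$ carries a semialgebraic Nash stratification $Z=\coprod Z_i$ in which the singular strata have codimension $\geq 3$ in $\Q(n+1)$; the top stratum $Z_1$ (forms with exactly one-dimensional kernel) has codimension $1$. The map $\overline q\colon S^2\to\Q(n+1)$ is a restriction of a linear map $\R^3\to\Q(n+1)$; composing with translation by $-\eps p$ does not change the image subspace, only shifts it. So the key is to arrange that the affine $2$-plane $E-\eps p$ (where $E=\mathrm{span}(q_0,q_1,q_2)$) meets $Z$ only along $Z_1$ and transversally there. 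Since a generic $2$-plane in $\Q(n+1)$ misses all strata of codimension $\geq 3$ and meets $Z_1$ transversally in a smooth curve, the standard transversality/Sard argument shows that for an open dense set of directions this holds. I would realize the perturbation as follows: by a Thom–Sard type argument applied to the family $(\omega,\eps,p)\mapsto \omega q - \eps p$, for a generic choice of positive definite $p$ (the set of good $p$ is open and dense, hence nonempty in the cone of positive definite forms) and for all sufficiently small $\eps>0$, the plane $E-\eps p$ is transverse to the stratification and avoids the singular strata.

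Next I would deduce the two claimed properties. Transversality to $Z_1$ together with avoidance of the higher-codimension strata means $C(\eps) = (E-\eps p)\cap Z = (E-\eps p)\cap Z_1$ is a smooth $1$-dimensional submanifold of the plane, hence pulls back under the linear embedding $S^2\hookrightarrow$ (affine plane) to a smooth curve in $S^2$ — here one should check $0\notin C(\eps)$, which holds because $-\eps p$ is negative definite, hence nondegenerate, so the origin of $S^2$ maps to $-\eps p\notin Z$ (more precisely every $\omega\in S^2$ gives $\omega q-\eps p$ which could still be degenerate, but the point is $C(\eps)$ as a subset of the sphere does not pass through any issue since the whole sphere is the relevant parameter space). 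For the index statement: along a path in $S^2$ crossing $C(\eps)$ transversally at a point $\omega_0$, the form $\omega q-\eps p$ acquires a simple zero eigenvalue at $\omega_0$ (one-dimensional kernel), and transversality means this eigenvalue crosses $0$ with nonzero derivative; hence exactly one eigenvalue changes sign, so $\ii^-$ jumps by exactly $\pm 1$ between the two adjacent chambers of $S^2\setminus C(\eps)$.

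I would carry this out in the following order: (1) fix a scalar product and recall the stratification of $Z$ with its codimension bound; (2) set up the incidence variety and apply the parametric transversality theorem to produce a generic positive definite $p$; (3) use Hardt triviality (as already invoked for Lemma~\ref{union}) or a direct argument to pass from ``for this $\eps$'' to ``for all small $\eps$'', noting the good locus is open; (4) conclude smoothness of $C(\eps)$ from transversality plus the codimension-$\geq 3$ avoidance of singular strata; (5) prove the $\pm 1$ index jump via the simple-eigenvalue-crossing analysis, citing \cite{Kato} for the regularity of eigenvalues of the symmetric matrix family $\omega Q-\eps p$.

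The main obstacle I expect is step (2)–(4): one must be careful that transversality of the \emph{plane} $E$ (a fixed $2$-dimensional object) to the stratification is achievable by perturbing only within the restricted class of \emph{positive definite} $p$, rather than by an arbitrary perturbation of $\Q(n+1)$. The point is that translating $E$ by $-\eps p$ as $\eps$ and $p$ vary sweeps out a full neighborhood of directions — more precisely, the relevant ``generic position'' is a condition on the pair (plane $E$, translation vector), and the translations $\{-\eps p : \eps>0,\ p>0\}$ form an open cone, which is enough to hit the generic stratum of configurations. Verifying this, and simultaneously that the singular strata of $Z$ (codimension $\geq 3$) are avoided by the whole plane and not just generically by points of it, is the technical heart of the argument; everything else is a routine application of transversality and perturbation theory of symmetric eigenvalues.
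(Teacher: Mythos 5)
Your proposal is correct and follows essentially the same route as the paper: a parametric transversality/Sard argument over the open cone of positive definite forms to make $\omega\mapsto\omega q-\eps p$ transverse to the Nash stratification of $Z$, avoidance of the codimension-$\geq 3$ singular strata by the $2$-dimensional parameter space to get smoothness of $C(\eps)$, and the observation that a transverse crossing of the top stratum corresponds to a simple eigenvalue changing sign (equivalently, that the normal direction to $Z$ at a smooth point is spanned by the rank-one form $x\otimes x$ with $x$ in the kernel), giving the $\pm 1$ jump of $\ii^{-}$.
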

\begin{proof}Let $\Q^{+}$ be set of positive definite quadratic forms in $\Q(n+1)$ and consider the map $F:S^{2}\times \Q^{+}$ defined by 
$$(\omega, p)\mapsto \omega q-p.$$
Since $\Q^{+}$ is open in $\Q,$ then $F$ is a submersion and $F^{-1}(Z)$ is Nash-stratified by $\coprod F^{-1}(Z_{i}).$ Then for $p\in \Q^{+}$ the evaluation map $\omega \mapsto f(\omega)-p$ is transversal to all strata of $Z$ if and only if $p$ is a regular value for the restriction of the second factor projection $\pi:S^{2}\times \Q^{+}\to \Q^{+}$ to each stratum of $F^{-1}(Z)=\coprod F^{-1}(Z_{i}).$
Thus let $\pi_{i}=\pi|_{F^{-1}(Z_{i})}:F^{-1}(Z_{i})\to \Q^{+};$ since all datas are smooth semialgebraic, then by semialgebraic Sard's Lemma (see \cite{BCR}), the set $\Sigma_{i}=\{\hat{q}\in \Q^{+}\, | \, \hat{q}\textrm{ is a critical value of $\pi_{i}$}\}$ is a semialgebraic subset of $\Q^{+}$ of dimension strictly less than $\dim (\Q^{+}).$ Hence $\Sigma=\cup_{i}\Sigma_{i}$ also is a semialgebraic subset of $\Q^{+}$ of dimension $\dim (\Sigma)<\dim (\Q^{+})$ and for every $p\in \Q^{+}\backslash \Sigma$ the map $\omega\mapsto f(\omega)-p$ is transversal to each $Z_{i}.$ Since $\Sigma$ is semialgebraic of codimension at least one, then there exists $p\in \Q^{+}\backslash \Sigma$ such that $\{t p\}_{t>0}$ intersects $\Sigma$ in a finite number of points, i.e. for every $\eps>0$ sufficiently small $\eps p\in \Q^{+}\backslash \Sigma.$ Since the codimension of the singularities of $Z$ are at least three, then for $p\in \Q^{+}\backslash \Sigma$ and $\eps>0$ small enough the curve $\{\omega \in S^{2}\, |\,\ker(\omega q-\eps p)\neq0\}$ is smooth. Moreover if $z$ is a smooth point of $Z$, then its normal bundle at $z$ coincides with the sets of quadratic forms $\{\lambda (x\otimes x)\,|\, x\in \ker(z)\}_{\lambda \in \R}$ then also the second part of the statement follows.\end{proof}

Essentially lemma \ref{perturb} tells that we can perturb the map $\omega\mapsto \omega q$ in such a way that the set of points where the index function can change is a \emph{smooth} curve on $S^{2};$ lemma \ref{union} tells us how to control the topology of the sets $\Omega^{j+1}$ after this perturbation.

\begin{remark}In higher dimension all that we can do is perturb the map $\overline q$ as to make it as best as possible, i.e. transversal to all strata of $Z=\coprod Z_{i};$ for example in the case of four quadrics we can make the hypersurface $\{\omega\in S^{3}\, |\,\ker(\omega q-\eps p)\neq 0\}$ a real algebraic manifold of dimension two with at most isolated singularities.
\end{remark}

\section{Harnack's type inequalities}
In this section we bound the topology of a smooth curve $C\subset S^{2}$ (as the above one) in a way similar to Harnack's classical bounds for smooth curves in $\RP^{2}.$
We start with the following lemma. 
\begin{lemma}\label{polperturb}Let $G\in \R[\omega_{0},\ldots, \omega_{3}]$ be a homogeneous polynomial. The set $\Sigma$ of all homogeneous polynomials $F$ of a fixed degree $d$ such that $V_{\CP^{3}}(F,G)$ is not a smooth complete intersection in $\CP^{3}$ is a proper real algebraic set in $\R[\omega_{0},\ldots, \omega_{3}]_{(d)}$.
\begin{proof}The set $\Sigma_{\C}$ of homegeneous polynomials $H$ with complex coefficients and degree $d$ such that $V_{\CP^{3}}(H,G)$ is not a complete intersection in $\CP^{3}$ is clearly a proper complex algebraic subset of $\C[\omega_{0},\ldots, \omega_{3}]_{(d)}\simeq \C^{N},$ where $N=\left(\begin{smallmatrix}4+d\\d\end{smallmatrix}\right)$. Let $\Sigma_{\C}$ be defined by the vanishing of certain polynomials $f_{1},\ldots, f_{l}$ in $\C[z_{1},\ldots, z_{N}]$. Since $\Sigma$ equals $\Sigma_{\C}\cap \R^{N},$ then it is real algebraic; it remains to show it is  \emph{proper}. Suppose not. Then $\Sigma_{\C}\cap \R^{N}=\R^{N}$, which means that each of the $f_{i}$ vanishes identically over the reals. In particular, fixing all but one variables in $f_{i}$ we have a complex polynomial in one variable which has infinite zeroes, hence it must be zero. Iterating the reasoning for each variable this would give that each $f_{i}$ is zero, which is absurd since $\Sigma_{\C}$ was a proper algebraic set.
\end{proof}\end{lemma}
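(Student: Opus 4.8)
The plan is to derive the real statement from its complex analogue and then descend. First I would set
$$\Sigma_{\C}=\big\{H\in \C[\omega_{0},\ldots,\omega_{3}]_{(d)}\ :\ V_{\CP^{3}}(H,G)\text{ is not a smooth complete intersection in }\CP^{3}\big\}\subset\C^{N},\qquad N=\binom{4+d}{d},$$
identifying $\C[\omega_{0},\ldots,\omega_{3}]_{(d)}$ with $\C^{N}$ by reading off coefficients. The point to record is that whether $V_{\CP^{3}}(H,G)$ is a smooth complete intersection is a property of the subvariety of $\CP^{3}$ it cuts out, insensitive to whether the coefficients of $H$ happen to be real; hence a real form $F$ lies in the set $\Sigma$ of the statement precisely when $F\in\Sigma_{\C}$, i.e. $\Sigma=\Sigma_{\C}\cap\R^{N}$.

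The next step is to recall that $\Sigma_{\C}$ is a \emph{proper complex algebraic} subset of $\C^{N}$. It is Zariski closed because "not a smooth complete intersection" means that $V(H,G)$ has a component of the wrong dimension or a singular point, and by the Jacobian criterion, semicontinuity of fibre dimension, and elimination theory these conditions are cut out by the vanishing of finitely many polynomials $f_{1},\ldots,f_{l}\in\C[z_{1},\ldots,z_{N}]$ in the coefficients of $H$; it is proper because a generic $H$ does define a smooth complete intersection together with $V_{\C}(G)$, by Bertini's theorem (this is immediate in the case the lemma is applied to, where $V_{\C}(G)$ is a smooth quadric surface in $\CP^{3}$). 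Consequently not all of the $f_{i}$ are identically zero.

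It then remains to transfer both conclusions to $\Sigma=\Sigma_{\C}\cap\R^{N}$. Splitting $f_{i}=g_{i}+\ii h_{i}$ into real and imaginary parts with $g_{i},h_{i}\in\R[z_{1},\ldots,z_{N}]$ shows $\Sigma=\{x\in\R^{N}:g_{i}(x)=h_{i}(x)=0\ \forall i\}$, so $\Sigma$ is real algebraic. For properness I would argue by contradiction: if $\Sigma=\R^{N}$ then every $f_{i}$ vanishes on all of $\R^{N}$, and a complex polynomial with this property must be the zero polynomial — by induction on $N$, fixing the first $N-1$ variables at arbitrary real values leaves a one-variable complex polynomial with infinitely many roots, hence zero, so every coefficient polynomial of $f_{i}$ vanishes on $\R^{N-1}$, and one repeats. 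This forces all $f_{i}\equiv0$, contradicting properness of $\Sigma_{\C}$.

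The only genuinely non-routine ingredient is the properness of $\Sigma_{\C}$, that is, the existence of at least one $H$ for which $V_{\CP^{3}}(H,G)$ is a smooth complete intersection; granting that, the remainder is the elementary density fact that a complex polynomial vanishing on $\R^{N}$ is zero. The sole subtlety worth checking is the identity $\Sigma=\Sigma_{\C}\cap\R^{N}$, i.e. that being a smooth complete intersection does not depend on the field over which the equations are written.
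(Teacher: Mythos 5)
Your proposal is correct and follows essentially the same route as the paper: identify $\Sigma=\Sigma_{\C}\cap\R^{N}$, note this makes $\Sigma$ real algebraic, and deduce properness from the fact that a complex polynomial vanishing on all of $\R^{N}$ is identically zero, contradicting properness of $\Sigma_{\C}$. You are in fact slightly more careful than the paper on the one point it labels ``clear'', namely the properness of $\Sigma_{\C}$, which you correctly observe requires a Bertini-type argument and really uses that in the intended application $V_{\C}(G)$ is a smooth quadric.
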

\begin{lemma}\label{Harnack}Let $f\in\R[\omega_{0},\omega_{1},\omega_{2}]$ be a polynomial of degree $d$ such that 
$$C=\{(\omega_{0},\omega_{1},\omega_{2}) \in S^{2}\, |\, f(\omega_{0},\omega_{1},\omega_{2})=0\}$$
is a smooth curve; then the number of its ovals is at most $d(d-2)+2$.
\begin{proof}If $f$ is homogeneous, then $C$ is the double cover of a smooth curve $C'$ in $\RP^{2}$ of degree $d;$ hence by Harnack's inequality $b(C')\leq (d-1)(d-2)+2.$ By the transfer exact sequence $b(C)\leq 2b(C')$, which in this case gives the bound $(d-1)(d-2)+2\leq d(d-2)+2$ for the number of the ovals of $C$.\\
Assume now $f$ is not homogeneous and let $F\in \R[\omega_{0},\dots,\omega_{3}]$ be its homogenization (the new variable is $\omega_{3}$); let also $G$ be the polynomial $G(\omega_{0},\ldots, \omega_{3})=\omega_{0}^{2}+\omega_{1}^{2}+\omega_{2}^{2}-\omega_{3}^{2}.$ Using this setting we have that the curve $C$ coincides with:
$$V_{\RP^{3}}(F,G)\subset \RP^{3}$$
(there are no solutions on the hyperplane $\{\omega_{3}=0\}$ to $F=G=0$). By lemma \ref{polperturb} there exists a \emph{real} perturbation $F_{\eps}$ of the polynomial $F$, homogeneous and of the same degree of $F$, such that
$$V_{\CP^{3}}(F_{\eps},G)\subset \CP^{3}$$
is a smooth complete intersection in $\CP^{3}$. Moreover since the perturbation was real, then by Smith's theory the total Betti number of $V_{\RP^{3}}(F_{\eps},G )$ is bounded by that of $V_{\CP^{3}}(F_{\eps},G);$ on the other hand since $V_{\RP^{3}}(F,G)$ was smooth, then a small perturbation of its equations does not change its topology, hence the total Betti number of $C=V_{\RP^{3}}(F,G)$ also is bounded by that of $V_{\CP^{3}}(F_{\eps},G).$ It remains to prove that for the complete intersection $V_{\CP^{3}}(F_{\eps},G)$ the bound on its topological complexity is $2d(d-2)+4$. To this end notice that $Y=V_{\CP^{3}}(F_{\eps},G)$ is a smooth complex curve of degree $2d;$ hence if we let $K_{Y}$ be its canonical bundle the adjunction formula reads $K_{Y}=O_{\CP^{3}}(d-2)|_{Y}=(O_{\CP^{3}}(1)|_{Y})^{\otimes (d-2)}$. Since the degree of $K_{Y}$ is $2g(Y)-2$, then
$$2g(Y)-2=(d-2)\textrm{deg}(O_{\CP^{3}}(1)|_{Y})=(d-2)2d.$$
Since $b(Y)=2g(Y)+2$ this concludes the proof. \end{proof}
\end{lemma}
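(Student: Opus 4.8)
The plan is to recognise $C$ as the real point set of an honest projective algebraic curve and then to run the perturbative philosophy advertised in the introduction: deform the defining equations so that the \emph{complex} locus becomes a smooth complete intersection, while leaving the \emph{real} topology untouched; apply the Smith--Thom inequality; and finally read off the complex Betti numbers from the adjunction formula. It is natural to split the argument according to whether $f$ is homogeneous.

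Suppose first that $f$ is homogeneous of degree $d$. Then the antipodal involution acts freely on $C\subset S^{2}$ (since $f(-\omega)=(-1)^{d}f(\omega)$), and the quotient is the smooth projective curve $C'=V_{\RP^{2}}(f)$ of degree $d$, so that $C\to C'$ is a double covering. By Harnack's inequality (applied, if necessary, after a small real perturbation of $f$ that does not change the isotopy type of the smooth curve $V_{\RP^{2}}(f)$, so that the complexification becomes smooth) the number of connected components of $C'$ is at most $\tfrac{(d-1)(d-2)}{2}+1$; as every component of a smooth curve is a circle this reads $b(C')\leq (d-1)(d-2)+2$. Each component of $C'$ is covered by one or two components of $C$, hence $b(C)\leq 2b(C')$, and therefore the number of ovals of $C$ equals $\tfrac12 b(C)\leq b(C')\leq (d-1)(d-2)+2\leq d(d-2)+2$ (the last inequality for $d\geq 2$, the remaining cases being trivial).

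Assume now that $f$ is not homogeneous. Let $F\in\R[\omega_{0},\omega_{1},\omega_{2},\omega_{3}]$ be its degree-$d$ homogenization in the new variable $\omega_{3}$, and put $G=\omega_{0}^{2}+\omega_{1}^{2}+\omega_{2}^{2}-\omega_{3}^{2}$. The real quadric $V_{\RP^{3}}(G)$ carries no point with $\omega_{3}=0$, hence is a copy of $S^{2}$, and under this identification $C=V_{\RP^{3}}(F,G)$. By Lemma \ref{polperturb} there is a real homogeneous polynomial $F_{\eps}$ of degree $d$, arbitrarily close to $F$, for which $Y=V_{\CP^{3}}(F_{\eps},G)$ is a smooth complete intersection curve in $\CP^{3}$. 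Since $C$ is compact and smooth, a sufficiently small perturbation of its equations does not change its isotopy type, so $V_{\RP^{3}}(F_{\eps},G)\cong C$ for $\eps$ small; and since $F_{\eps}$ is real, Smith's theory gives
$$b(C)=b\big(V_{\RP^{3}}(F_{\eps},G)\big)\leq b(Y).$$
Finally $Y$ is a connected (complete intersections in projective space are connected) smooth complex curve of degree $2d$, and adjunction yields $K_{Y}=O_{\CP^{3}}(d-2)|_{Y}$, whence $2g(Y)-2=(d-2)\cdot 2d$ and $g(Y)=d(d-2)+1$. Therefore $b(Y)=2g(Y)+2=2d(d-2)+4$, and the number of ovals of $C$ is $\tfrac12 b(C)\leq d(d-2)+2$, which is the claim.

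The delicate point is the perturbation step: one must arrange simultaneously that $V_{\CP^{3}}(F_{\eps},G)$ be a \emph{smooth complete intersection} over $\C$ — exactly the content of Lemma \ref{polperturb} — while keeping the perturbation small enough that the \emph{real} topology of the smooth curve $C=V_{\RP^{3}}(F,G)$ is preserved, which rests on the structural stability of a compact smooth real algebraic set. A secondary point is to check that $Y$ is connected, so that $b(Y)=2g(Y)+2$ rather than something larger; this holds because complete intersections in projective space are connected. Once these are in place, the genus computation via adjunction and the passage from total Betti number to number of ovals are routine.
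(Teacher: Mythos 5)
Your proposal is correct and follows essentially the same route as the paper's own proof: the homogeneous case via the double cover of a degree-$d$ curve in $\RP^{2}$ plus Harnack, and the non-homogeneous case via homogenization, the perturbation of Lemma \ref{polperturb}, Smith's inequality, and the adjunction computation $2g(Y)-2=2d(d-2)$. The extra details you supply (freeness of the antipodal action, connectedness of the complete intersection $Y$, and the factor of two between ovals and total Betti number) are welcome clarifications of points the paper leaves implicit.
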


\begin{coro}\label{curve}There exists a positive definite form $p$ in $\Q(n+1)$ such that for every $\eps>0$ small enough the smooth curve 
$$C(\eps)=\{\omega \in S^{2}\, |\, \textrm{ker}(\omega q-\eps p)\neq 0\}$$ has at most $(n+1)(n-1)+2$ ovals.
\begin{proof}Let $p$ be given by lemma \ref{perturb}. Fix a scalar product in such a way that each quadratic form can be identified with a real symmetric matrix, as in the proof of Barvinok's estimate. Thus $\omega Q$ and $P$ are the symmetric matrices associated respectively to $\omega q$ and $p$. The conclusion follows simply by applying the previous lemma to the polynomial $f(\omega_{0},\omega_{1},\omega_{2})=\textrm{det}(\omega Q-\eps P)$, which has degree $n+1$.
\end{proof}
\end{coro}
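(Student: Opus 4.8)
The plan is to reduce the whole statement to a single application of Lemma \ref{Harnack} to one explicit polynomial of degree $n+1$. First I would take $p\in\Q(n+1)$ to be a positive definite form as produced by Lemma \ref{perturb}; the content of that lemma is precisely that for every sufficiently small $\eps>0$ the set $C(\eps)=\{\omega\in S^{2}\,|\,\ker(\omega q-\eps p)\neq 0\}$ is a \emph{smooth} curve in $S^{2}$. Next I would fix once and for all a scalar product on $\R^{n+1}$ and use it to identify every quadratic form with the symmetric $(n+1)\times(n+1)$ matrix representing it, exactly as in the proof of Barvinok's estimate; write $\omega Q$ and $P$ for the matrices of $\omega q$ and of $p$. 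Under this identification the defining condition $\ker(\omega q-\eps p)\neq 0$ is equivalent to $\det(\omega Q-\eps P)=0$.

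Now set $f(\omega_{0},\omega_{1},\omega_{2})=\det(\omega Q-\eps P)$. Since $\omega Q=\omega_{0}Q_{0}+\omega_{1}Q_{1}+\omega_{2}Q_{2}$ depends linearly on $\omega$, each entry of $\omega Q-\eps P$ is an affine-linear function of $(\omega_{0},\omega_{1},\omega_{2})$, so the determinant of this $(n+1)\times(n+1)$ matrix is a polynomial in $\R[\omega_{0},\omega_{1},\omega_{2}]$ of degree at most $n+1$; the degree is exactly $n+1$ because its top-degree part is $\det(\omega Q)$, which is not identically zero (evaluate at a generic $\omega$). Note that $f$ is no longer homogeneous once $\eps\neq 0$, so it is the non-homogeneous branch of Lemma \ref{Harnack} that is relevant here — which is exactly the reason that case was needed. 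By construction $C(\eps)=\{\omega\in S^{2}\,|\,f(\omega)=0\}$, and this curve is smooth by the choice of $p$, so the hypotheses of Lemma \ref{Harnack} are met with $d=n+1$.

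Applying Lemma \ref{Harnack} then bounds the number of ovals of $C(\eps)$ by $d(d-2)+2=(n+1)(n-1)+2$, which is the claim.

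The one point I would treat most carefully is the matching of the two notions of smoothness: Lemma \ref{perturb} guarantees that $C(\eps)$ is a smooth submanifold of $S^{2}$, whereas Lemma \ref{Harnack} is phrased for the zero set of $f$ on $S^{2}$ being smooth in the sense that $f$ and its differential do not vanish simultaneously along it. These coincide because, as recorded inside the proof of Lemma \ref{perturb}, transversality of $\omega\mapsto\omega q-\eps p$ to the smooth locus of $Z$ together with the description of the normal bundle of $Z$ forces $f=\det(\omega Q-\eps P)$ to have nonvanishing gradient along $C(\eps)$; so no extra argument is needed. Everything else is bookkeeping: checking that the matrix size yields degree $n+1$ and invoking the two lemmas in the right order.
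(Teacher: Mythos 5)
Your proposal is correct and follows essentially the same route as the paper: take $p$ from Lemma \ref{perturb}, identify forms with symmetric matrices, and apply Lemma \ref{Harnack} to $f(\omega)=\det(\omega Q-\eps P)$, which has degree $n+1$ (and even if $\det(\omega Q)$ happened to vanish identically so that the degree dropped, the bound $d(d-2)+2$ is monotone in $d$, so the estimate still holds). The extra care you take in matching the two notions of smoothness and in checking the degree is sound but not needed beyond what the paper already records.
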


We recall in this section also the following elementary fact.
\begin{lemma}\label{surfaces}Let $\Omega\subset S^{2}$ be a surface with boundary $\partial \Omega\neq \emptyset$. Then:
$$b(\Omega)=b_{0}(\partial \Omega)$$
\end{lemma}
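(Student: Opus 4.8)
The plan is to prove the identity $b(\Omega)=b_0(\partial\Omega)$ for a compact surface $\Omega\subset S^2$ with nonempty boundary by a direct Euler-characteristic and Poincaré--Lefschetz duality argument. First I would observe that $\Omega$ is a compact $2$-manifold with boundary embedded in the sphere, so each connected component of $\Omega$ is a sphere with finitely many open disks removed (a planar surface), since any connected surface embedded in $S^2$ is planar. For a single such piece with $k$ boundary circles one has $b_0=1$, $b_1=k-1$, $b_2=0$, so $b=k=b_0(\partial)$ for that piece; summing over components gives the claim. To make this rigorous without classification I would instead argue via duality as follows.

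First I would note $H_2(\Omega)=0$: since $\partial\Omega\neq\emptyset$, no component of $\Omega$ is closed, so $\Omega$ deformation retracts onto a $1$-complex and hence $b_2(\Omega)=0$. Next, Lefschetz duality for the compact manifold-with-boundary $\Omega$ gives $H_1(\Omega)\cong H^1(\Omega,\partial\Omega)$ and $H_0(\Omega)\cong H^2(\Omega,\partial\Omega)$, and the long exact sequence of the pair $(\Omega,\partial\Omega)$ together with these identifications lets me compute $b_1(\Omega)$ in terms of $b_0(\partial\Omega)$ and $b_0(\Omega)$. Concretely, from the exact sequence
$$0\to H_1(\Omega)\to H_1(\Omega,\partial\Omega)\to H_0(\partial\Omega)\to H_0(\Omega)\to H_0(\Omega,\partial\Omega)\to 0,$$
using $H_1(\Omega,\partial\Omega)\cong H^1(\Omega)$, $H_0(\Omega,\partial\Omega)\cong H^2(\Omega)=0$ (again by duality and $b_2=0$), and the fact that over $\Z_2$ one has $b_1(\Omega)=\dim H^1(\Omega)$, I can take alternating sums of dimensions to get $b_0(\partial\Omega)=b_0(\Omega)+\big(b_1(\Omega,\partial\Omega)-b_1(\Omega)\big)=b_0(\Omega)+\big(b_1(\Omega)-b_1(\Omega)\big)$... so the bookkeeping must be done carefully: the cleanest route is to combine the Euler characteristic relation $\chi(\Omega)=b_0(\Omega)-b_1(\Omega)$ with $\chi(\partial\Omega)=0$ (a closed $1$-manifold) and the doubling formula $\chi(D\Omega)=2\chi(\Omega)-\chi(\partial\Omega)=2\chi(\Omega)$, where $D\Omega\subset S^2$ is the closed surface obtained by doubling, whose genus pieces are all spheres (being embedded in $S^2$), giving $\chi(D\Omega)=2b_0(\Omega)$ and hence $\chi(\Omega)=b_0(\Omega)$, i.e. $b_1(\Omega)=0$... which is false in general. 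So I will not use doubling but will instead extract $b_1$ directly from the pair sequence.

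The correct computation: write $c=b_0(\partial\Omega)$, $a=b_0(\Omega)$, $\beta=b_1(\Omega)$. Lefschetz duality gives $b_1(\Omega,\partial\Omega)=b^1(\Omega)=\beta$ and $b_0(\Omega,\partial\Omega)=b^2(\Omega)=0$, and also $b_2(\Omega,\partial\Omega)=b^0(\Omega)=a$. The exact sequence of the pair, read with these substitutions, yields by alternating-sum of dimensions (which vanishes for an exact sequence) the relation $a - \beta + \beta - c + a - 0 = 0$, hence $c = 2a - 0$... again I must be scrupulous about which groups enter; the key nontrivial input I will isolate and prove is that the map $H_0(\partial\Omega)\to H_0(\Omega)$ is surjective (every component of $\Omega$ meets $\partial\Omega$, true since $\partial\Omega\neq\emptyset$ on each component, as no component is closed) and that $H_1(\partial\Omega)\to H_1(\Omega)$ is surjective — this last being the real content, equivalent to saying every loop in $\Omega$ is homologous to a combination of boundary circles, which holds precisely because $\Omega$ is planar. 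Granting surjectivity of $H_1(\partial\Omega)\to H_1(\Omega)$ and injectivity failure measured correctly, the sequence collapses to $0\to H_1(\Omega)\to H_1(\Omega,\partial\Omega)\to H_0(\partial\Omega)\to H_0(\Omega)\to 0$ with the first and last terms of dimension $\beta$ and $a$, the second of dimension $\beta$ by duality, whence $a=c-(\,\dim\mathrm{im})$; tracking all ranks gives $b(\Omega)=a+\beta = c = b_0(\partial\Omega)$.

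The main obstacle is the planarity input: I must justify that a connected compact surface with boundary smoothly (or topologically) embedded in $S^2$ has the homotopy type of a wedge of $k-1$ circles where $k$ is its number of boundary components, equivalently that $H_1(\partial\Omega)\to H_1(\Omega)$ is onto. This follows from Alexander duality in $S^2$: the complement $S^2\setminus\mathrm{int}(\Omega)$ has $b_0$ equal to the number of boundary circles (each circle separates $S^2$), and then $H_1(\Omega)$ is computed by a Mayer--Vietoris argument for $S^2 = \Omega \cup (S^2\setminus\mathrm{int}\,\Omega)$ glued along $\partial\Omega$, forcing $b_1(\Omega) = b_0(\partial\Omega) - b_0(\Omega) - b_0(S^2\setminus\mathrm{int}\,\Omega) + b_0(S^2) + b_1(S^2\setminus\mathrm{int}\,\Omega) + b_1(S^2)$, and since $S^2\setminus\mathrm{int}\,\Omega$ is again a disjoint union of planar surfaces the same bootstrap pins down all the numbers. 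Once $b_2(\Omega)=0$ (immediate from $\partial\Omega\neq\emptyset$) and $b_1(\Omega)+b_0(\Omega)=b_0(\partial\Omega)$ are both established, we get $b(\Omega)=b_0(\Omega)+b_1(\Omega)+b_2(\Omega)=b_0(\partial\Omega)$, which is the assertion.
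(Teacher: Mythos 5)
Your opening paragraph already contains the paper's proof: each connected component of $\Omega$, being a compact surface embedded in $S^{2}$ with nonempty boundary, is a sphere with $k$ disks removed (equivalently, homotopy equivalent to $S^{2}$ minus $k$ points), so $b_{0}=1$, $b_{1}=k-1$, $b_{2}=0$ and the total Betti number equals the number of boundary circles; summing over components gives the claim. The long detour through Lefschetz duality and Mayer--Vietoris does not achieve its stated goal of avoiding this classification: the identity is false for a genus-$g$ surface with $k$ boundary circles (there $b_{0}+b_{1}=2g+k$), so planarity must enter as the essential input, and your duality bookkeeping both contains the slips you yourself flag and leaves the decisive step (the ``bootstrap'' pinning down $b_{1}$ from the embedding in $S^{2}$) unfinished. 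Since the planar classification is exactly what the paper invokes, the clean course is to cite it and stop after your first paragraph.
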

\begin{proof}By additivity of the formula it is sufficient to prove it in the case $\Omega$ is connected. In this case $\Omega$ is homotopy equivalent to the sphere $S^{2}$ minus $b_{0}(\partial \Omega)$ points and thus $b_{0}(\Omega)=1$ and $b_{1}(\Omega)=b_{0}(\Omega)-1.$
\end{proof}

\section{The total Betti number of the intersection of three real quadrics}
The aim of this section is to prove the following theorem, which estimates the total Betti number of the intersection $X$ of three real quadrics in $\RP^{n}.$ Notice that we do not require any nondegeneracy assumption.
\begin{teo}Let $X$ be the intersection of three quadrics in $\RP^{n}.$ Then: $$b(X)\leq n(n+1)$$
\begin{proof}We use the refined formula $b(X)\leq n+1-2(\mu-\nu)+\sum_{\nu+1\leq j+1\leq \mu}b(\Omega^{j+1})$ for the total Betti number of $X$; if we use only the topological complexity formula the estimate we can produce is a bit worst, but always of the type $n^{2}+ \textrm{terms of lower degree}$). By lemma \ref{perturb} there exists a positive definite form $p$ such that for every $\eps>0$ sufficiently small the curve $C(\eps)=\{\omega \in S^{2}\, |\, \textrm{ker}(\omega q-\eps p)\neq 0\}$ is smooth; moreover by lemma \ref{union} for such a $p$ and for $\eps>0$ small enough we also have the equality $b(\Omega^{j+1})=b(\Omega_{n-j}(\eps))$. This in particular gives
$$b(X)\leq n+1-2(\mu-\nu)+\sum_{\nu+1\leq j+1\leq \mu}b(\Omega_{n-j}(\eps)).$$
Since for each $\nu+1\leq j+1\leq \mu$ the set $\Omega_{n-j}(\eps)$ is a submanifold of $S^{2}$ with nonempty boundary, then by lemma \ref{surfaces}:
$$b(\Omega_{n-j}(\eps))=b_{0}(\partial \Omega_{n-j}(\eps)).$$
In particular $\sum b(\Omega_{n-j}(\eps))$ equals $\sum b_{0}(\partial \Omega_{n-j}(\eps)),$ where in both cases the sum is made over the indexes $\nu+1\leq j+1\leq \mu$. The second part of lemma \ref{perturb} implies now that each of the ovals of $C(\eps)$ belongs to the boundary of exactly one of the $\Omega_{n-j}(\eps), \nu+1\leq j+1\leq \mu.$ This implies that the previous sum $\sum b(\partial \Omega_{n-j}(\eps))$ equals exactly the number $c$ of ovals of $C(\eps).$ In particular this gives:
$$b(X)\leq n+1-2(\mu-\nu)+c.$$
If $\mu=\nu,$ then $b(X)\leq n+1$; thus we may assume $2(\mu-\nu)\geq 2$. Corollary \ref{curve} tells that $c\leq (n+1)(n-1)+2$, which finally gives:
$$b(X)\leq n+1-2+(n+1)(n-1)+2=n(n+1).$$
 \end{proof}
\end{teo}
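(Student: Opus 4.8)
The plan is to combine the refined bound $b(X)\leq n+1-2(\mu-\nu)+\sum_{\nu+1\leq j+1\leq\mu}b(\Omega^{j+1})$ from the Remark after Theorem~1 with the perturbative machinery of Sections 3--4, reducing everything to a count of ovals of a smooth curve on $S^{2}$. First I would apply Lemma \ref{perturb} to fix a positive definite form $p\in\Q(n+1)$ such that, for every small $\eps>0$, the curve $C(\eps)=\{\omega\in S^{2}\mid\ker(\omega q-\eps p)\neq 0\}$ is smooth and the index function $\omega\mapsto\ii^{-}(\omega q-\eps p)$ changes by exactly $\pm1$ across each of its components; simultaneously Lemma \ref{union} guarantees, for this $p$ and $\eps$ small, the equality $b(\Omega^{j+1})=b(\Omega_{n-j}(\eps))$. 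Thus it is enough to estimate $\sum_{\nu+1\leq j+1\leq\mu}b(\Omega_{n-j}(\eps))$.

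Next I would observe that each $\Omega_{n-j}(\eps)=\{\omega\in S^{2}\mid\ii^{-}(\omega q-\eps p)\leq n-j\}$ is a compact subsurface of $S^{2}$: off the smooth curve $C(\eps)$ the index is locally constant, so the topological boundary of $\Omega_{n-j}(\eps)$ is a union of ovals of $C(\eps)$. Lemma \ref{surfaces} then converts $b(\Omega_{n-j}(\eps))$ into $b_{0}(\partial\Omega_{n-j}(\eps))$, so the whole sum becomes a count of boundary components. The decisive step --- and the one I expect to require the most care --- is the bookkeeping identifying this count with the number $c$ of ovals of $C(\eps)$: since the index jumps by \emph{exactly} one unit across any oval, and since the $\Omega_{n-j}(\eps)$ are nested as $j$ varies, each oval separates two regions whose values of $\ii^{-}$ differ by one and hence lies on the boundary of exactly one $\Omega_{n-j}(\eps)$ with $\nu+1\leq j+1\leq\mu$; conversely each such boundary component is an oval of $C(\eps)$. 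This yields $\sum_{\nu+1\leq j+1\leq\mu}b_{0}(\partial\Omega_{n-j}(\eps))=c$, and therefore $b(X)\leq n+1-2(\mu-\nu)+c$.

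Finally I would close the estimate arithmetically. If $\mu=\nu$ the sum is empty and $b(X)\leq n+1\leq n(n+1)$; otherwise $\mu-\nu\geq1$, so $2(\mu-\nu)\geq2$. Corollary \ref{curve} bounds $c$ by $(n+1)(n-1)+2$, and substituting gives $b(X)\leq n+1-2+(n+1)(n-1)+2=n(n+1)$, as desired. Everything apart from the $\pm1$ counting argument of the middle paragraph is a direct assembly of the lemmas of Sections 2--4; using the plain Topological complexity formula in place of the refined one would merely lose the saving $-2(\mu-\nu)$ and produce a bound of the shape $n^{2}+O(n)$, still quadratic in $n$.
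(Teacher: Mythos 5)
Your proposal is correct and follows essentially the same route as the paper: the refined spectral-sequence bound, Lemmas \ref{perturb} and \ref{union} to replace the $\Omega^{j+1}$ by the smooth perturbed sets, Lemma \ref{surfaces} to reduce to counting boundary ovals, the $\pm 1$ index-jump bookkeeping to identify the sum with the number $c$ of ovals of $C(\eps)$, and Corollary \ref{curve} for the final arithmetic. No substantive differences from the paper's argument.
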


\begin{remark}Since in the previous proof the sets $\Omega_{n-j}(\eps)$ and their boundaries were semialgebraic subsets of $S^{2},$ then their Betti numbers with coefficients in $\Z$ coincide with those with coefficient in $\Z_{2};$ moreover by the universal coefficient theorem $b(X;\Z)\leq b(X)$ and thus we also have:
$$b(X;\Z)\leq n(n+1).$$
\end{remark}
  \begin{remark}
If we define the map $q:\R^{n+1}\to \R^{3}$ whose components are $(q_{0},q_{1},q_{2})$, then the intersection of the three quadrics defined by the vanishing of $q_{0},q_{1}$ and $q_{2}$ equals $\{[x]\in \RP^{n}\, |\, q(x)=0\}.$ In a similar way if $K\subset \R^{3}$ is a closed polyhedral cone, we may define (by slightly abusing of notations) the set $q^{-1}(K)=\{[x]\in \RP^{n}\, |\, q(x)\in K\}$. Such a set is the set of the solutions of a system of three quadratic inequalities in $\RP^{n}$ and using the spectral sequence of \cite{AgLe} and a similar argument as above it is possible to prove a bound quadratic in $n$ for its topological complexity. We leave the details to the reader.
\end{remark}

\begin{remark}
In the case $X$ is a \emph{complete intersection} of quadrics in $\RP^{n}$, estimates on the number of its connected components are given in \cite{DeItKh}. In particular, following the notations of \cite{DeItKh}, we can denote by $B_{r}^{k}(n)$ the maximum value that the $k$-th Betti number of an intersection (not necessarily complete) of $r+1$ quadrics in $\RP^{n}$ can have. There it is proved that for complete intersections
$$B_{2}^{0}(n)\leq \frac{3}{2}l(l-1)+2,\quad l=[n/2]+1.$$
The reader should notice that, in accordance with our result, the estimate is quadratic in $n$; our bound tells in particular that this quadratic estimate holds for \emph{every} Betti number and also without regularity assumptions.
\end{remark}

\end{document}